 \newcommand{\xcs} {{\cal S}}
\newcommand{\inverse}[1]{ ( #1 )^{-1}}
\begin{document}             

\setlength{\parindent}{0mm}
 \setlength{\parskip}{4pt}

\title{Subrecursive Approximations of Irrational Numbers by Variable Base Sums}

\author{Ivan Georgiev\inst{1}\\Lars Kristiansen\inst{2,3}\\Frank Stephan\inst{4}}

\authorrunning{I. Georgiev, L. Kristiansen and F. Stephan}

 \institute{
Department of Mathematics and Physics, Faculty of Natural Sciences, \\ University "Prof. d-r Asen Zlatarov", Burgas 8010, Bulgaria\setcounter{footnote}{0}\renewcommand{\thefootnote}{\fnsymbol{footnote}}\footnote{I. Georgiev has been supported by the Bulgarian National Science Fund through the project "Models of computability", DN-02-16/19.12.2016.} \\
\email{ivandg@yahoo.com}
\and
Department of Mathematics, University of Oslo, Norway
\and
    Department of Informatics, University of Oslo, Norway \\
		\email{larsk@math.uio.no}
\and
Department of Mathematics and School of Computing, \\ 
National University of Singapore, Singapore 119076, Republic of Singapore\footnote{F. Stephan has been supported in part by the Singapore Ministry
of Education Academic Research Fund grant MOE2016-T2-1-019 / R146-000-234-112.} \\
\email{fstephan@comp.nus.edu.sg}
}

 \maketitle                   

\newcommand{\xd}{\texttt{D}}
\newcommand{\integer}{\ensuremath{\mathbb Z}}
\newcommand{\rational}{\ensuremath{\mathbb Q}}
\newcommand{\nat}{\ensuremath{\mathbb N}}
\newcommand{\real}{\ensuremath{\mathbb R}}
\newcommand{\kleeneT}{{\mathcal T}}

\newcommand{\kleeneU}{{\mathcal U}}

\section{Introduction and Basic Definitions}

There are numerous ways to represent real numbers. We may use, e.g., Cauchy sequences, Dedekind cuts, numerical base-10 expansions,
numerical base-2 expansions and continued fractions. If we work with full Turing computability, all these representations
yield the same class of real numbers. If we work with some restricted notion of computability, e.g., polynomial time
computability or primitive recursive computability, they do not. This phenomenon has been investigated over
the last  seven decades by Specker \cite{specker}, Mostowski \cite{most}, Lehman \cite{lehman}, Ko \cite{koa,kob},
Labhalla \& Lombardi \cite{ll},
Georgiev \cite{geo}, Kristiansen  \cite{ciejour,tociejour} and quite a few more.

Irrational numbers can be represented by infinite sums.
Sum approximations from below and above were introduced in Kristiansen \cite{ciejour} and studied further in Kristiansen \cite{tociejour}.
Every irrational number $\alpha$ between 0 and 1 can be uniquely written as an infinite sum of the form
$$
\alpha \;\; = \;\; 0 \; + \; \frac{\xd_1}{b^{k_1}} \; + \; \frac{\xd_2}{b^{k_2}} \; + \; \frac{\xd_3}{b^{k_3}} \; + \; \ldots
$$
where 
\begin{itemize}
\item $b\in\nat\setminus\{0,1\}$ and $\xd_i\in \{1,\ldots, b-1\}$ (note that $\xd_i\neq 0$ for all $i$)
\item $k_i  \in\nat\setminus\{0\}$    and  $k_i<k_{i+1}$.
\end{itemize}
Let $\hat{A}^\alpha_b(i)=  \xd_ib^{-k_i}$ for $i>0$ (and let  $\hat{A}^\alpha_b(0)=0$).
The rational number $\sum_{i=1}^n \hat{A}^\alpha_b(i)$ is an approximation of $\alpha$ that lies below $\alpha$,
and we will say that the function  $\hat{A}^\alpha_b$ is the {\em base-$b$ sum approximation from below of $\alpha$}.
The  {\em base-$b$ sum approximation from above of $\alpha$} is a symmetric function  $\check{A}^\alpha_b$
such that $1- \sum_{i=1}^n \check{A}^\alpha_b(i)$ is an approximation of $\alpha$ that lies above $\alpha$ (and we
have $\sum_{i=1}^\infty \hat{A}^\alpha_b(i) + \sum_{i=1}^\infty \check{A}^\alpha_b(i) = 1$).
Let $\xcs$ be any  class of subrecursive functions which is closed under primitive recursive operations. Furthermore,
let $\xcs_{b\uparrow}$ denote the set of irrational numbers that have a 
base-$b$ sum approximation  from below in   $\xcs$, and
let $\xcs_{b\downarrow}$ denote the set of irrational numbers that have a 
base-$b$ sum approximation  from above in $\xcs$. It is proved in   \cite{tociejour}
 that  $\xcs_{b\uparrow}$ and  $\xcs_{b\downarrow}$ are incomparable classes, that is,
 $\xcs_{b\uparrow}  \not\subseteq  \xcs_{b\downarrow}$ and  $\xcs_{b\downarrow}  \not\subseteq  \xcs_{b\uparrow}$.
Another interesting result proved in \cite{tociejour} is that 
$\xcs_{a\downarrow} \subseteq  \xcs_{b\downarrow}$  iff 
$\xcs_{a\uparrow} \subseteq  \xcs_{b\uparrow}$ iff 
every prime factor of $b$ is  a prime factor of $a$. 

In this paper we prove some results on {\em general sum approximations}.
The {\em general sum approximation from below} of $\alpha$ is the function $\hat{G}^{\alpha}:\nat\times\nat\rightarrow \rational$ defined
by $\hat{G}^{\alpha}(b,n)= \hat{A}^{\alpha}_b(n)$; let $\hat{G}^{\alpha}(b,n)=0$ if $b<2$.
The {\em general sum approximation from above} of $\alpha$ is the function $\check{G}^{\alpha}:\nat\times\nat\rightarrow \rational$ defined
by $\check{G}^{\alpha}(b,n)= \check{A}^{\alpha}_b(n)$; let $\check{G}^{\alpha}(b,n)=0$ if $b<2$.
Let $\xcs$ be any class of subrecursive functions which is closed under primitive recursive operations. 
Furthermore, let $\xcs_{g\uparrow}$ denote the set of irrational numbers that have a 
general sum approximation from below  in $\xcs$, and let $\xcs_{g\downarrow}$ denote the set of irrational numbers that have a 
general sum approximation from above  in $\xcs$.

It was proved in  \cite{ciejour} that 
$\xcs_{g\uparrow}\cap \xcs_{g\downarrow}$ contains exactly the irrational numbers that have a continued fraction in the class $\xcs$.
In this paper we prove that $\xcs_{g\uparrow}\neq \xcs_{g\downarrow}$. Moreover, we prove that
$$ \xcs_{g\downarrow} \;\; \neq \;\; \bigcap_{b=2}^\infty \xcs_{b\downarrow}\;\;\;\mbox{ and } 
\;\;\;  \xcs_{g\uparrow} \;\; \neq \;\; \bigcap_{b=2}^{\infty} \xcs_{b\uparrow} .$$
Some might find it interesting (at least the authors do) that we  manage to complete all our proof without resorting to
the standard computability-theoretic machinery involving enumerations, universal functions, diagonalizations, and so on.
We prove our results by providing natural  irrationals numbers (the numbers are natural in the sense that they
have neat and transparent definitions).

\section{Preliminaries}

We will restrict our attention to real numbers between 0 and 1.

A {\em base} is a natural number strictly greater than 1, and a {\em base-$b$ digit} is a natural number in the
set $\{0,1,\ldots, b-1\}$.

Let $b$ be a base, and let $\xd_1,\ldots, \xd_n$ be base-$b$ digits.  We
will use $(0.\xd_1\xd_2\ldots \xd_n)_b$ to denote the rational number 
$\sum_{i=1}^n \xd_i b^{-i}$.

Let  $\xd_1,\xd_2,\ldots$ be an infinite sequence of base-$b$ digits. 
We say that $(0.\xd_1\xd_2\ldots)_b$ is {\em the base-$b$ expansion of the real number $\alpha$} if for all $n \geq 1$ we have
$$
(0.\xd_1\xd_2\ldots \xd_n)_b \;\; \leq \;\; \alpha \;\; < \;\;  (0.\xd_1\xd_2\ldots \xd_n)_b + b^{-n}.
$$
Every real number $\alpha$ has a unique base-$b$ expansion (note the strict inequality).

When $\alpha =(0.\xd_1\xd_2\ldots \xd_n)_b$ for some $n$ with $\xd_n \neq 0$, 
we say that $\alpha$ has a {\em finite base-$b$ expansion} of length $n$. 
Otherwise, we say that $\alpha$ has an {\em infinite base-$b$ expansion}, and this infinite base-$b$ expansion
 is periodic iff $\alpha$ is rational. 
More concretely, if $\alpha = cd^{-1}$ for 
non-zero relatively prime $c,d\in\nat$, then the base-$b$ expansion of $\alpha$ is 
of the form $\xd_1\ldots \xd_{s}(\xd_{s+1}\ldots \xd_t)^\omega$ which we use as shorthand for the infinite sequence 
$\xd_1\ldots \xd_{s}\xd_{s+1}\ldots \xd_t\xd_{s+1}\ldots \xd_t\xd_{s+1}\ldots \xd_t\ldots$.
Let $d=d_1d_2$, where $d_1$ is relatively prime to $b$ and largest possible. Then $s$ is the least natural number, such that $d_2$ divides $b^s$ and the length of the period $t-s$ is the multiplicative order of $b$ modulo $d_1$. It  follows straightforwardly that $t < d$. 
Of course, $\alpha$ has a finite base-$b$ expansion iff $d_1 = 1$, that is, 
iff every prime factor of $d$ is a prime factor of $b$.

We assume the reader is familiar with subrecursion theory and subrecursive functions.
An introduction  to the subject can be found in \cite{peter} or \cite{rose}.

A function $\phi$ is {\em elementary in} a function $\psi$,
written $\phi \leq_{E} \psi$, if
$\phi$ can be generated from the initial functions
$\psi$, $2^x$, $\max$, $0$, $S$ (successor),  $I^n_i$  (projections)
by  composition and bounded primitive recursion. A function $\phi$ is {\em elementary} if
$\phi \leq_{E} 0$.
A function $\phi$ is {\em primitive recursive in} a function $\psi$,
written $\phi \leq_{PR} \psi$, if
$\phi$ can be generated from the initial functions
by  composition and (unbounded) primitive recursion. A function $\phi$ is {\em primitive recursive} if
$\phi \leq_{PR} 0$.

Subrecursive functions in general, and  elementary functions in particular, are
formally functions over natural numbers ($\nat$). We assume some coding of integers ($\integer$) 
and rational numbers ($\rational$)  into the natural numbers. We consider such a coding to
be trivial. Therefore we allow for subrecursive functions from rational numbers  into natural numbers,
from pairs of rational numbers into rational numbers, etc., with no further comment.
Uniform systems for coding  finite sequences of natural numbers
are available inside the class of elementary functions.
Hence, for any reasonable coding,
 basic operations on rational numbers -- like e.g.\ addition, subtraction and multiplication --
will obviously be elementary. It is also obvious that there is an elementary function $\psi(q,i,b)$ that yields the $i^{\mbox{\scriptsize th}}$
digit in the base-$b$ expansion of the rational number $q$.

 A function $f:\nat\rightarrow \nat$ is {\em honest}
 if it is  monotonically increasing ($f(x)\leq f(x+1)$), dominates $2^x$  ($f(x)\geq 2^x$)
and has elementary graph (the relation $f(x)=y$ is elementary). 

A class of functions $\xcs$ is
 {\em subrecursive class} if $\xcs$ is an efficiently enumerable class of computable total functions.
For any subrecursive class $\xcs$ there exists an honest function $f$ such that $f\not\in \xcs$
(see Section 8 of \cite{ciejour} for more details).

More on elementary functions, primitive recursive functions and honest functions can be found in Section 
2 of \cite{ciejour} and in  \cite{apal}.

\section{Irrational Numbers with Interesting Properties}

\begin{definition}
Let $P_i$ denote the $i^{\mbox{\scriptsize th}}$ prime ($P_0=2, P_1=3,\ldots$). 
We define the auxiliary  function $g$ by 
$$ g(0) \; = \; 1 \;\;\; \mbox{ and } \;\;\; g(j+1) \; = \; P_j^{2(j+2)(g(j)+1)^3}\; . $$

For any honest function $f$ and any $n\in\nat$, we define the rational number $\alpha^f_n$  and the number $\alpha^f$ by
$$ \alpha^f_n \; = \; \sum_{i=0}^n P_i^{-h(i)}\;\;\;\; \mbox{ and }\;\;\;\;
 \alpha^f \; = \; \lim_{n\rightarrow \infty}  \alpha^f_n$$
where $h(i) = g(f(i)+i)$ (for any $i\in\nat$).
\end{definition}

It is easy to see that $g$ and $h$ are strictly increasing honest functions. Only the fact that $g$ has elementary graph requires some explanation:
for all $x,y\in\nat$ the equality $g(x) = y$ holds iff
$$ \exists s\; ((s)_0 = 1\;\;\;\&\;\;\; \forall j \leq x \;((s)_{j+1} \;=\;  P_j^{2(j+2)((s)_j+1)^3})\;\;\;\&\;\;\;(s)_x \;=\; y). $$
We have that $s$ is the code of a sequence of length $x+1$ and its $j$-th element $(s)_j$ is equal to $g(j)$ for all $j \leq x$. Since $g$ is increasing and the coding of sequences can be chosen to be monotonic, the existential quantifier on $s$ can be bounded by the code of the sequence $y, \ldots, y$ ($x+1$ times), which is elementary in $x,y$.

The function $h$ possesses the following growth property, which will need:
\begin{equation}\label{proph}
P_{n}^{2(n+2)(h(n)+1)^3} \;\; < \;\; h(n+1)
\end{equation}
for any $n\in\nat$. The function $f$ itself might not possess it and this explains why we introduce the function $g$ in the definition of $\alpha^f$.

When $f$ is a fixed honest function, we abbreviate $\alpha^f_j$ and $\alpha^f$ to $\alpha_j$ and $\alpha$, respectively.

\begin{lemma}\label{irrat}
The number $\alpha^f$ is irrational.
\end{lemma}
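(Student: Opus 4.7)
This is a classical ``clearing denominators'' argument for very rapidly converging series. Assume toward a contradiction that $\alpha^f = p/q$ for positive integers $p,q$. Set $D_N = \prod_{i=0}^N P_i^{h(i)}$. Each summand of $\alpha^f_N = \sum_{i=0}^N P_i^{-h(i)}$ is cleared by a factor of $D_N$, so $D_N \alpha^f_N \in \nat$; since $q D_N \alpha^f = p D_N \in \nat$ too, the quantity
$$ I_N \;=\; q D_N (\alpha^f - \alpha^f_N) \;=\; q D_N \sum_{i=N+1}^{\infty} P_i^{-h(i)} $$
is a positive integer, and hence $I_N \geq 1$ for every $N$. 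The goal is to show that $I_N < 1$ for some $N$, yielding the contradiction.

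For the tail, the growth property~\eqref{proph} forces successive terms $P_i^{-h(i)}$ to decay super-geometrically: indeed $P_{i+1}^{h(i+1)}$ exceeds $2^{P_i^{h(i)}}$ by~\eqref{proph}, so the ratio of consecutive terms is astronomically smaller than $1/2$. A straightforward geometric bound therefore gives $\alpha^f - \alpha^f_N < 2 P_{N+1}^{-h(N+1)}$. Combined with the crude estimate $D_N \leq P_N^{(N+1)h(N)}$ (replace each $P_i$ and each $h(i)$ with $i \leq N$ by its maximum), this leads to
$$ I_N \;<\; 2q \cdot P_N^{(N+1)h(N)} \cdot P_{N+1}^{-h(N+1)} . $$

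Finally, I invoke~\eqref{proph} once more. Taking logarithms, what remains to verify is that $h(N+1) \log P_{N+1}$ dominates $\log(2q) + (N+1) h(N) \log P_N$; but $h(N+1) > P_N^{2(N+2)(h(N)+1)^3}$, which is triply exponential in the stage-$N$ data, while the competing expression is only singly exponential in $h(N)$, so the domination is overwhelming for every sufficiently large $N$. This contradicts $I_N \geq 1$. The main (and essentially only) obstacle is this final numeric comparison between $q D_N$ and $P_{N+1}^{h(N+1)}$; it is painless precisely because the auxiliary function $g$ used in the definition of $h$ was built with exactly the growth margin encoded in~\eqref{proph}, so that no matter what the honest function $f$ is, the exponent $h(N+1)$ is more than large enough to swamp $D_N$.
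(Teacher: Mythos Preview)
Your proof is correct and follows essentially the same strategy as the paper: assume $\alpha^f=p/q$, observe that $qD_N(\alpha^f-\alpha^f_N)$ is a positive integer (the paper phrases this as a lower bound $1/(qD_N)\le\alpha^f-\alpha^f_N$), bound the tail by a constant times $P_{N+1}^{-h(N+1)}$, estimate $D_N\le P_N^{(N+1)h(N)}$, and use the growth property~\eqref{proph} to obtain a contradiction for large $N$. The paper's bookkeeping with the exponents $q_i$ in the factorisation of $q$ is a cosmetic refinement, not a genuinely different idea.
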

\begin{proof} Assume that $\alpha^f = \frac{p}{q}$ for some natural numbers $p,q$ with $q$ non-zero. Let $q_i$ be the exponent of $P_i$ in the prime factorization of $q$.
We choose $n$, such that $P_n$ is greater than all prime factors of $q$ and $h(n)$ is greater than $q_0 + q_1 + \ldots + q_n$.
It is easy to see that $\alpha^f - \alpha_n^f$ is a non-zero rational number with denominator $P_0^{h(0)'}P_1^{h(1)'}\ldots P_n^{h(n)'}$,
where $h(i)' = h(i) + q_i$. Therefore,
$$ \frac{1}{P_0^{h(0)'}P_1^{h(1)'}\ldots P_n^{h(n)'}} \;\;\leq\;\; \alpha^f \;-\; \alpha_n^f. $$
But we also have
$$ \alpha^f \;-\; \alpha_n^f \;\;=\;\; P_{n+1}^{-h(n+1)} \;+\; P_{n+2}^{-h(n+2)} \;+\; \ldots $$
and since $h$ is strictly increasing,
$$ P_{n+1}^{-h(n+1)} \;+\; P_{n+2}^{-h(n+2)} \;+\; \ldots \;\;\leq\;\; P_{n+1}^{-h(n+1)} \;+\; P_{n+1}^{-h(n+1)-1} \;+\; \ldots \;\;\leq\;\; P_{n+1}^{-h(n+1)+1}. $$
We obtained the inequalities
$$ \frac{1}{P_0^{h(0)'}P_1^{h(1)'}\ldots P_n^{h(n)'}} \;\;\leq\;\; \alpha^f \;-\; \alpha_n^f \;\;\leq\;\ P_{n+1}^{-h(n+1)+1}, $$
which imply in turn
$$ P_{n+1}^{h(n+1)-1} \;\;\leq\;\; P_0^{h(0)'}P_1^{h(1)'}\ldots P_n^{h(n)'}, $$
$$ P_{n+1}^{h(n+1)-1} \;\;\leq\;\; P_{n+1}^{h(0)'+h(1)'+\ldots+h(n)'}, $$
$$ h(n+1) \;\;\leq\;\; (n+1)h(n) + q_0 + \ldots + q_n + 1, $$
$$ h(n+1) \;\;\leq\;\; (n+2)h(n) + 1. $$
But the last inequality is easily seen to be false by (\ref{proph}). Contradiction. $\qed$
\end{proof}

In fact, it is not hard to show using (\ref{proph}) that $\alpha^f$ is a Liouville number, therefore it is even transcendental.

\begin{lemma} \label{abrahamlincoln}
For any  $j\in\nat$ and any base $b$, we have
\begin{itemize}
 \item[(i)] if  $P_i$ divides $b$ for all $i \leq j$, then $\alpha_j$ has a finite base-$b$ expansion of length $h(j)$
 \item[(ii)] if  $P_i$ does not divide $b$ for some $i \leq j$, then $\alpha_j$ has an infinite (periodic) base-$b$ expansion.
\end{itemize}
\end{lemma}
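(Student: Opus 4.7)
The plan is to use the standard characterization recalled in the preliminaries: a reduced rational $c/d$ has a finite base-$b$ expansion iff every prime factor of $d$ divides $b$, and otherwise has an infinite periodic expansion. So the task reduces to identifying the denominator of $\alpha_j$ in lowest terms and then reading off which primes occur.

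First I would put $\alpha_j$ over the common denominator $D_j = \prod_{i=0}^{j} P_i^{h(i)}$, obtaining $\alpha_j = N_j / D_j$ where $N_j = \sum_{i=0}^{j} \prod_{0 \leq k \leq j,\, k \neq i} P_k^{h(k)}$. To verify that this representation is already in lowest terms, I would reduce $N_j$ modulo each prime $P_i$ with $i \leq j$: every summand other than the $i$-th contains the factor $P_i^{h(i)}$ and hence vanishes, leaving $N_j \equiv \prod_{k \neq i} P_k^{h(k)} \pmod{P_i}$, which is nonzero since the primes $P_k$ with $k \neq i$ are distinct from $P_i$. Hence $P_i \nmid N_j$ for every $i \leq j$, so $\gcd(N_j, D_j) = 1$ and the reduced denominator is exactly $D_j$.

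With this, part (i) is immediate: if every $P_i$ with $i \leq j$ divides $b$, then every prime factor of $D_j$ divides $b$, so $\alpha_j$ has a finite base-$b$ expansion. The length is the least $n$ with $D_j \mid b^n$; since $h$ is strictly increasing, the exponent of each $P_i$ in $b^{h(j)}$ is at least $h(j) \geq h(i)$, so $D_j \mid b^{h(j)}$, and the contribution of the term $P_j^{-h(j)}$ furnishes the matching lower bound. For part (ii), if some $P_{i_0}$ with $i_0 \leq j$ fails to divide $b$, then $P_{i_0}$ is a prime factor of $D_j$ that does not divide $b$, so by the same characterization the expansion is infinite; periodicity follows because $\alpha_j \in \rational$.

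The only non-routine point is the lowest-terms computation $\gcd(N_j, D_j) = 1$; once the reduced denominator is identified, both conclusions follow by simple bookkeeping on the prime factors of $D_j$ versus those of $b$. The minor subtlety is that pinning the length in (i) down to exactly $h(j)$ (as opposed to merely at most $h(j)$) requires a brief check on the $P_j$-adic valuation of $b$, but no substantial new idea is involved.
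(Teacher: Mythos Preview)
Your argument is correct and matches the paper's: write $\alpha_j = m\big/\prod_{i\le j} P_i^{h(i)}$, observe that each $P_i$ divides every summand of $m$ except one so the fraction is already in lowest terms, and then read off (i) and (ii) from the characterization of finite versus periodic base-$b$ expansions in the preliminaries. Your extra detail on the length in (i) and the caveat about the $P_j$-adic valuation of $b$ go beyond what the paper spells out; note that the exact value $h(j)$ genuinely requires $v_{P_j}(b)=1$, which is satisfied in the lemma's only application (Theorem~\ref{finbeck}, where $b=\prod_{i\le n}P_i$).
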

\begin{proof}
For any $j\in\nat$ and base $b$ we have that
$$ \alpha_j = \frac{m}{\prod_{i=0}^j P_i^{h(i)}}, $$
where $m$ is the sum of $j+1$ summands. Each of these summands is divisible by all $P_i$ for $i\leq j$ with the exception of exactly one of them. From this it follows that $m$ is relatively prime to all $P_i$ for $i\leq j$, so the above fraction is in lowest terms. The rest follows easily from the preliminaries on base-$b$ expansions. $\qed$
\end{proof}

\begin{lemma} \label{barackobama}
Let 
\begin{itemize} 
\item $b$ be any base, and let $j\in\nat$ be such that $P_j> b$
\item $(0.\xd_1\xd_2\ldots )_b$ be the base-$b$ expansion of  $\alpha_j$
\item $(0.\dot{\xd}_1\dot{\xd}_2\ldots )_b$ be the base-$b$ expansion of $\alpha_{j+1}$
\item  $M = M(j) = P_j^{(j+1)h(j)}$ and $M' = M'(j) = h(j+1)$.
\end{itemize} 
Then
\begin{itemize}
 \item[(i)] there are no more than $M$ consecutive zeros in the base-$b$ expansion of $\alpha_j$, that is,
 for any $k\in\nat\setminus\{0\}$ there exists $i\in\nat$
such that $$k \leq i < k + M \;\;\mbox{ and }\;\; \xd_i \neq 0$$
 \item[(ii)] the first $M'-M$ digits of the base-$b$ expansions of $\alpha_j$ and $\alpha_{j+1}$ coincide, that is
$$ i \leq M' - M \;\;\Rightarrow\;\; \xd_i = \dot{\xd}_i $$
and moreover, these digits also coincide with the corresponding digits of the base-$b$ expansion of $\alpha$.
\end{itemize}
\end{lemma}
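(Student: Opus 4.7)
My plan is to reduce both parts to one Diophantine estimate. Since $P_j>b$, Lemma \ref{abrahamlincoln}(ii) guarantees that $\alpha_j$ has an infinite base-$b$ expansion, and the proof of Lemma \ref{abrahamlincoln} shows $\alpha_j=A/B$ in lowest terms with $B=\prod_{i=0}^{j}P_i^{h(i)}$. Consequently, for every integer $N\geq 0$ and every integer $p$, the rational $\alpha_j-p/b^N$ is a non-zero fraction with denominator dividing $Bb^N$, so whenever it is positive we have $\alpha_j-p/b^N\geq 1/(Bb^N)$. I will also use the very loose bound $B\leq P_j^{(j+1)h(j)}=M$, which holds because each of the $j+1$ factors of $B$ is at most $P_j^{h(j)}$.

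For (i), I would assume for a contradiction that $\xd_k=\cdots=\xd_{k+M-1}=0$ for some $k\geq 1$. The definition of base-$b$ expansion, applied at position $k+M-1$, then yields $\alpha_j-(0.\xd_1\ldots\xd_{k-1})_b<b^{-(k+M-1)}$, while the Diophantine lower bound supplies $\alpha_j-(0.\xd_1\ldots\xd_{k-1})_b\geq 1/(Bb^{k-1})$. Together these force $b^M<B$, contradicting the estimates $b^M\geq 2^M>M\geq B$.

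For the first clause of (ii), set $n=M'-M$. It suffices to show $\alpha_{j+1}<(0.\xd_1\ldots\xd_n)_b+b^{-n}$; combined with $\alpha_{j+1}>\alpha_j\geq(0.\xd_1\ldots\xd_n)_b$ and the uniqueness of base-$b$ expansions, this forces $\dot{\xd}_i=\xd_i$ for $i\leq n$. Since $\alpha_{j+1}-\alpha_j=P_{j+1}^{-h(j+1)}$ and the Diophantine estimate gives $(0.\xd_1\ldots\xd_n)_b+b^{-n}-\alpha_j\geq 1/(Bb^n)$, the task reduces to $Bb^n<P_{j+1}^{h(j+1)}$. Here I would chain $Bb^n\leq M\cdot P_j^{M'-M}=P_j^{(j+1)h(j)+M'-M}<P_j^{M'}<P_{j+1}^{M'}=P_{j+1}^{h(j+1)}$, using $B\leq M$, $b<P_j$, and the crude observation $(j+1)h(j)<M$ (which holds because $P_j\geq 2$ makes $P_j^{(j+1)h(j)}$ dwarf its own exponent).

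For the ``moreover'' assertion I would iterate: applying the first clause of (ii) with each $k\geq j$ in place of $j$ (valid since $P_k\geq P_j>b$) yields that $\alpha_k$ and $\alpha_{k+1}$ share their first $M'(k)-M(k)$ digits, and a short verification with (\ref{proph}) shows $M'(k)-M(k)\geq n$ for every $k\geq j$. By transitivity the first $n$ digits of $\alpha_k$ and $\alpha_j$ coincide for every $k\geq j$, so passing to the limit places $\alpha$ in $[(0.\xd_1\ldots\xd_n)_b,(0.\xd_1\ldots\xd_n)_b+b^{-n}]$, and Lemma \ref{irrat} excludes the right endpoint. The only delicate point in the whole argument is making the exponent chain in (ii) close properly; the rest is bookkeeping.
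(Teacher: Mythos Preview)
Your argument is correct and nicely unified, but it is genuinely different from the paper's. The paper works directly with the periodic structure of the base-$b$ expansion of $\alpha_j$: writing it as $0.\xd_1\ldots\xd_s(\xd_{s+1}\ldots\xd_t)^\omega$, one has $t<\prod_{i\le j}P_i^{h(i)}\le M$, so any window of $M$ consecutive digits contains a full period. Part (i) then follows because the period contains a nonzero digit, and part (ii) because the period contains a digit $\neq b-1$, so the carry caused by adding $P_{j+1}^{-h(j+1)}\le b^{-M'}$ cannot propagate past one period length. For the ``moreover'' clause the paper sums the increments directly: $\alpha_{j+k}-\alpha_j\le b^{-M'(j)+1}$, and lets $k\to\infty$.

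You replace all of this by a single Liouville-type estimate: since $\alpha_j$ has reduced denominator $B=\prod_{i\le j}P_i^{h(i)}\le M$ and infinite base-$b$ expansion, $|\alpha_j-p/b^N|\ge 1/(Bb^N)$ for every integer $p$. This simultaneously forbids long zero runs (distance to the truncation below is too large) and long carry chains (distance to the truncation-plus-$b^{-n}$ above is too large). The trade-off is that the paper's periodicity argument is more transparent about \emph{why} the bound $M$ appears (it is literally a bound on the preperiod-plus-period), while your denominator argument is slicker and avoids ever mentioning periods; it also makes the two parts visibly symmetric. Your iteration for the ``moreover'' clause is a bit heavier than the paper's direct summation, but the monotonicity of $M'(k)-M(k)$ you need does follow easily from~(\ref{proph}) (indeed already from the weaker~(\ref{sqineq})), so nothing is missing.
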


\begin{proof}
By Lemma \ref{abrahamlincoln} (ii), $\alpha_j$ has an infinite 
periodic base-$b$ expansion of the form $0.\xd_1\ldots \xd_s(\xd_{s+1}\ldots \xd_t)^\omega$ with $s < t$. 
Using the preliminaries on base-$b$ expansions we obtain
\begin{align}
 t - s \;\;\leq \;\; t \;\; < \;\; \prod_{i=0}^j P_i^{h(i)} \;\; \leq \;\; P_j^{(j+1)h(j)} \;\; = \;\; M \; . \label{tobarac}
\end{align}
Thus (i) holds since every $M$ consecutive digits of $\alpha_j$ contain all the digits $\xd_{s+1},\ldots, \xd_t$ of at least one period. 

We turn to the proof of (ii). We have
\begin{align}
\alpha_j \;\; < \;\; \alpha_{j+1} \;\; = \;\; \alpha_j \, + \, P_{j+1}^{-h(j+1)} \;\; \leq \;\; \alpha_j \, + \, b^{-M' } \label{nybarac}
\end{align}
since $b^{M'} < P_j^{M'} = P_j^{h(j+1)} < P_{j+1}^{h(j+1)}$.
At least one digit in the period $\xd_{s+1}\ldots \xd_t$ is different from $b-1$, and the length of the period is $t-s$.
Thus, it follows from (\ref{nybarac}) that
\begin{align}
  \mbox{$\xd_i = \dot{\xd}_i$ for any $i \leq M' - (t-s)$. }\label{nynybarac}
\end{align}
It follows from  (\ref{tobarac}) and (\ref{nynybarac}) that
 the first $M'-M$ digits of the base-$b$ expansions of $\alpha_j$ and $\alpha_{j+1}$ coincide.
 Moreover, since $M'(j)$ is strictly increasing in $j$, we have
$$ \alpha_j \;\; < \;\; \alpha_{j+k} \;\; \leq \;\; \alpha_j \;\;+\;\; \sum_{i<k} b^{-M'(j+i)} \;\; \leq \;\; \alpha_j \;\; + \;\;b^{-M'(j)+1} $$
for any $k \geq 1$. Letting $k\to\infty$ we obtain as above that the first $M'-M$ digits of $\alpha_j$ and $\alpha$ coincide.
\qed
\end{proof}

In the proof of the next theorem the following inequality will be needed:
\begin{equation}\label{sqineq}
 M(j)^2 + M(j) + 1 < M'(j) 
\end{equation}
for all $j\in\nat$. To prove it apply (\ref{proph}) together with
$$ P_j^{2(j+1)h(j)} + P_j^{(j+1)h(j)} + 1 < 3P_j^{2(j+1)h(j)} < P_j^{2(j+1)h(j)+2} $$
for all $j\in\nat$. 

\begin{theorem} \label{sebastian}
Let $f$ be any  honest function, and let $b$ be any base.
The function $\hat{A}^{\alpha^f}_b$ is elementary.
\end{theorem}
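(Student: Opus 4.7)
Given input $n$, I want to compute $\hat{A}^{\alpha^f}_b(n) = \xd_n b^{-k_n}$, where $k_n$ is the position of the $n$-th nonzero digit in the base-$b$ expansion of $\alpha^f$ and $\xd_n$ is that digit. The strategy is to localize this digit inside the stable prefix of the expansion provided by Lemma \ref{barackobama}(ii), working throughout with the explicit rational $\alpha_j$ for a judiciously chosen $j$ depending on $n$.

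First I set $j = J(n)$ to be the least natural number with $M(j) > n$. Then by (\ref{sqineq}), $M'(j) > M(j)^2 + M(j) + 1 \geq (n+1) M(j) + 1$, so $M'(j) - M(j) \geq n M(j)$. By Lemma \ref{barackobama}(i) every window of $M(j)$ consecutive digits of $\alpha_j$ contains a nonzero digit, so the first $M'(j) - M(j)$ digits of $\alpha_j$ contain at least $n$ nonzero digits; by Lemma \ref{barackobama}(ii) these same digits coincide with the corresponding digits of $\alpha^f$. In particular $k_n \leq M'(j) - M(j)$ and the $n$-th nonzero digit of $\alpha^f$ equals the $n$-th nonzero digit of $\alpha_j$.

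Second, I form the rational $\alpha_j = \sum_{i=0}^j P_i^{-h(i)}$ explicitly from $h(0), \ldots, h(j)$, and then iterate the elementary digit-extraction function $\psi(q,i,b)$ mentioned in the preliminaries over $i = 1, 2, \ldots$, maintaining a running count of nonzero digits until the count reaches $n$; this yields $k_n$ and $\xd_n$, and the procedure returns $\xd_n b^{-k_n}$.

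The main obstacle is verifying elementarity. Although $g$ and hence $h$ grow faster than any elementary function, the index $J(n)$ is an inverse of tower-type growth and is therefore extremely small in $n$; moreover, both the comparisons $M(j) > n$ needed to compute $J(n)$ and the values $h(0), \ldots, h(J(n))$ needed to assemble $\alpha_j$ can be obtained elementarily from $f$, by exploiting the elementary graph of $g$ together with the honesty of $f$ to bound all relevant searches. Once $\alpha_j$ is on hand as an explicit fraction, the remaining operations---rational arithmetic, base-$b$ digit extraction via $\psi$, and a linear scan up to position $M'(j) - M(j)$ for the $n$-th nonzero digit---are elementary in the sizes of $\alpha_j$, $j$, and $n$.
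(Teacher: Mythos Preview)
Your overall strategy is the right one, but there is a genuine gap in the elementarity argument. You take $j=J(n)$ to be the least index with $M(j)>n$; in the paper's notation this is $j_{\mathrm{pap}}+1$, where $M(j_{\mathrm{pap}})\le n<M(j_{\mathrm{pap}}+1)$. To form $\alpha_j$ you need $h(j)$, and the size of the fraction $\alpha_j$ (hence the cost of extracting its digits) is governed by $P_j^{h(j)}$. But $h(j)$ is \emph{not} bounded by any elementary function of $n$: from $M(j-1)\le n$ you get $h(0),\dots,h(j-1)<M(j-1)\le n$, while $h(j)$---and hence $M(j)=P_j^{(j+1)h(j)}$---can be arbitrarily large compared with $n$. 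The elementary-graph property of $h$ lets you \emph{verify} a guess $h(j)=y$, but it does not let you \emph{find} $y$ without an a~priori bound, and no such bound is available here. (Note also that the theorem asserts $\hat{A}^{\alpha^f}_b$ is elementary, not merely elementary in $f$; and ``elementary in $f$'' would not help anyway, since $g$ itself is not elementary.)

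The paper's proof circumvents exactly this obstacle by a case split. It works with $j_{\mathrm{pap}}$, so that $h(0),\dots,h(j_{\mathrm{pap}})<M(j_{\mathrm{pap}})\le n$ and $\alpha_{j_{\mathrm{pap}}}$ is computable elementarily in $n$, and then tests whether $n^2+1<M'(j_{\mathrm{pap}})-M(j_{\mathrm{pap}})$. If so, the first $n^2+1$ digits of $\alpha_{j_{\mathrm{pap}}}$ already agree with those of $\alpha$ and contain at least $n$ nonzero digits, so a scan of length $n^2+1$ suffices. If not, then $h(j_{\mathrm{pap}}+1)=M'(j_{\mathrm{pap}})\le n^2+n+1$, so \emph{now} $h(j_{\mathrm{pap}}+1)$ is elementarily bounded in $n$, and one can compute $\alpha_{j_{\mathrm{pap}}+1}$ and $M(j_{\mathrm{pap}}+1)$ and scan $nM(j_{\mathrm{pap}}+1)$ digits. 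Your argument is essentially the second branch applied unconditionally, which is precisely the situation in which the needed bound on $h(j)$ is not guaranteed.
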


\begin{proof}
Fix the least $m$  such that  $P_m> b$. We will use the functions $M$ and $M'$ from Lemma \ref{barackobama}. We will argue that we can compute the rational number $\hat{A}^{\alpha}_b(n)$ elementarily in $n$ when $n\geq M(m)$. Note that $M(m)$ is a fixed number (it does not depend on $n$). Thus,  we can compute $\hat{A}^{\alpha}_b(n)$ by a trivial algorithm  when $n < M(m)$ (use a huge table).

Assume $n\geq M(m)$. We will now give an algorithm for computing $\hat{A}^{\alpha}_b(n)$ elementarily in $n$.

\textit{Step 1 of the algorithm:} Compute (the unique) $j$ such that 
\begin{align}
M(j) \; \leq \; n  \; < \; M(j+1) \label{ensebastian}
\end{align}
{\em (end of Step 1)}.

Step 1 is a computation elementary in $n$ since $M$ has elementary graph.  
So is Step 2 as $M'$ also has elementary graph.

\textit{Step 2 of the algorithm:} Check if the relation
\begin{align}
n^2 + 1 \;\; < \;\; M'(j) \; - \; M(j) \label{tosebastian}
\end{align}
holds. If it holds, carry out step 3A below, otherwise, carry out step 3B
{\em (end of Step 2)}.

\textit{Step 3A of the algorithm:}
Compute $\alpha_j$. Then 
compute base-$b$ digits $\xd_1,\ldots,\xd_{n^2+1}$ such that
$$
(0.\xd_1\xd_2\ldots\xd_{n^2+1})_b \;\; \leq \;\; \alpha_j 
< \;\; (0.\xd_1\xd_2\ldots\xd_{n^2 + 1})_b\, + \,  b^{-(n^2+1)}\; .
$$
Find $k$ such that $\xd_k$ is the $n^{\mbox{\scriptsize th}}$ digit different from 0 in the sequence $\xd_1,\ldots,\xd_{n^2+1}$.
Give the output $\xd_k b^{-k}$ {\em (end of Step 3A)}.

Recall that $\alpha_j  =  \sum_{i=0}^j P_i^{-h(i)}$. We can  compute 
$\alpha_j$ elementarily in $n$ since $h(0),h(1),\ldots,h(j) < M(j) \leq n$ and $h$ is honest. Thus, we can also compute the  
base-$b$ digits $\xd_1,\xd_2,\ldots,\xd_{n^2+1}$  elementarily in $n$. In order to prove that our algorithm is correct, we must verify that
\begin{itemize}
\item[(A)] at least $n$ of the digits $\xd_1,\xd_2,\ldots,\xd_{n^2+1}$ are different from 0, and
\item[(B)]   $\xd_1,\xd_2,\ldots,\xd_{n^2+1}$ coincide with the first $n^2 + 1$ digits of $\alpha$.
\end{itemize}

By Lemma \ref{barackobama} (i) the sequence
$
\xd_{kM(j)+1},\xd_{kM(j)+2 },\ldots,\xd_{(k+1)M(j)}
$
 contains at least one non-zero digit (for any  $k\in\nat$). Thus, (A) holds since $n\geq M(j)$. Using (\ref{tosebastian}) and Lemma \ref{barackobama} (ii) we see that (B) also holds. This proves that the output $\xd_k b^{-k} = \hat{A}^\alpha_b(n)$.

\textit{Step 3B of the algorithm:}
Compute $\alpha_{j+1}$ and $M(j+1)$. Then proceed as in step 3A with $\alpha_{j+1}$ in place of $\alpha_j$ and $nM(j+1)$ in place of $n^2$ {\em (end of Step 3B)}.

Step 3B is only executed when  $M'(j) - M(j) \leq n^2+1$. 
Thus, we have $M'(j) = h(j+1) \leq n^2 + n + 1$. 
This entails that we can compute $h(j+1)$ -- and also $\alpha_{j+1}$ and $M(j+1)$ -- elementarily in $n$. 
The inequality (\ref{sqineq}) gives that
$$ M(j+1)^2 \; + \; M(j+1) \; + \; 1 \;\;<\;\; M'(j+1) $$
which  together with (\ref{ensebastian}) imply
$$ nM(j+1) \;+\; 1 \;\;<\;\; M'(j+1) \;-\; M(j+1) \; . $$
As in step 3A, there will be at least  $n$ non-zero digits among the
 first $nM(j+1)$ digits of $\alpha_{j+1}$. Moreover, the  first $nM(j+1)$ digits of $\alpha_{j+1}$ coincide
 with the corresponding digits of $\alpha$.
\qed
\end{proof}

\begin{theorem} \label{finbeck}
Let $f$ be any  honest function. We have
$ f \leq_{PR}  \hat{G}^{\alpha^f}$ ($f$ is primitive recursive in $ \hat{G}^{\alpha^f}$).
\end{theorem}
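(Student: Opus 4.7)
The plan is to show $h(i) := g(f(i)+i)$ is primitive recursive in $\hat{G}^{\alpha^f}$; once $h(i)$ is computed, $f(i)$ follows elementarily because $g$ is strictly increasing with elementary graph and the unique $x$ with $g(x) = h(i)$ can be found by a bounded search. I would run a primitive recursion on $i$ maintaining the invariant that $h(0),\ldots,h(i-1)$ are known, so that $\alpha_{i-1} = \sum_{j<i} P_j^{-h(j)}$ is available as an explicit rational. The base case $i=0$ is handled by a single oracle call: the summand $2^{-h(0)}$ puts a $1$ at binary position $h(0)$, while the remaining summands $P_j^{-h(j)}$ for $j \ge 1$ have their first non-zero binary digits past position $\lceil h(1)\log_2 3\rceil$, which by (\ref{proph}) dwarfs $h(0)$, so no carries interfere. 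Thus $\hat{G}^{\alpha^f}(2,1) = 2^{-h(0)}$, from which $h(0)$ is read off.

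For the inductive step, set $b := P_0 P_1 \cdots P_{i-1}$. By Lemma \ref{abrahamlincoln}(i), $\alpha_{i-1}$ has a finite base-$b$ expansion of length $h(i-1)$, and by (\ref{proph}) the tail $\alpha^f - \alpha_{i-1}$, which is of order $P_i^{-h(i)}$, is much smaller than $b^{-h(i-1)}$. Hence the base-$b$ expansion of $\alpha^f$ agrees with that of $\alpha_{i-1}$ throughout positions $1,\ldots,h(i-1)$. Letting $K$ be the number of non-zero base-$b$ digits of $\alpha_{i-1}$ (computed directly from the rational), the oracle call $\hat{G}^{\alpha^f}(b, K+1)$ returns the first non-zero base-$b$ digit of the tail at some position $k'$ with $b^{-k'} \le \alpha^f - \alpha_{i-1} < b^{-k'+1}$. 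Since $\alpha^f - \alpha_{i-1}$ is sandwiched between $P_i^{-h(i)}$ and $2\,P_i^{-h(i)}$ and $\log b / \log P_i \le i$, this yields the explicit upper bound $B := (k'+1)(i+1) \ge h(i)$. With $B$ in hand I query $\hat{G}^{\alpha^f}(P_i, k)$ for $k = 1,\ldots,B+3$ and sum to obtain a lower approximation $\tilde\alpha \le \alpha^f$ with $\alpha^f - \tilde\alpha \le P_i^{-B-3}$. The rational $X := \tilde\alpha - \alpha_{i-1}$ is then extremely close to $P_i^{-h(i)}$, and the unique $h^* \in \{1,\ldots,B\}$ satisfying $|X - P_i^{-h^*}| < P_i^{-h^*}/4$ is $h(i)$; a short case analysis against $h^* = h(i) \pm 1$ (using $P_i \ge 3$ whenever $i \ge 1$) confirms that no other candidate passes the test.

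The main obstacle is producing the primitive recursive bound $B$ on $h(i)$ before any fine approximation of $\alpha^f$ is available: without such a bound one cannot decide how many terms of the sum approximation to request, and $h(i)$ can grow arbitrarily fast since $f$ is only assumed honest. The single oracle call $\hat{G}^{\alpha^f}(b, K+1)$ that yields $B$ is what upgrades the construction from recursive to primitive recursive; the remaining steps — computing $\alpha_{i-1}$, forming partial sums of the base-$P_i$ approximation, and searching through $O(B)$ candidates — are routine elementary arithmetic.
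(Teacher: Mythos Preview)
Your argument is correct and shares the paper's core idea: recursively compute $h$ by using the oracle in base $b=\prod_{j<i}P_j$, where $\alpha_{i-1}$ has a finite base-$b$ expansion, so that a single call $\hat{G}^{\alpha^f}(b,K+1)$ lands in the tail and yields a primitive-recursive bound on $h(i)$; then recover $f$ from $h$ via the elementary graph of $g$. The paper does exactly this, with one simplification you miss. Once the bound is in hand, the paper does \emph{not} go back to the oracle: since $h$ itself has elementary graph, a bounded search for $y$ with $h(n+1)=y$ below $\bigl(\hat{G}^{\alpha^f}(b,h(n)+1)\bigr)^{-1}+1$ already pins down $h(n+1)$. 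Your second phase --- the $B+3$ calls in base $P_i$, forming $\tilde\alpha$, and the numerical test $|X-P_i^{-h^*}|<P_i^{-h^*}/4$ --- is sound but superfluous; you are re-deriving by approximation a value that the honesty of $h$ hands you directly. (You even invoke this very trick at the end to pass from $h$ to $f$; the paper just applies it one step earlier.) A minor point: the tail is bounded by $P_i^{-h(i)+1}$ rather than $2P_i^{-h(i)}$, though by (\ref{proph}) it is in fact far below $2P_i^{-h(i)}$, so your sandwich and the bound $B=(k'+1)(i+1)$ still go through. In summary, both proofs are the same recursion on $h$ with the same key oracle call; the paper's is shorter because it exploits the elementary graph of $h$ immediately.
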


\begin{proof}
Fix $n\in\nat$, and let $b$ be the base $b= \prod_{i=0}^n P_i$. 
By Lemma (\ref{abrahamlincoln}) (i), $\alpha_n$ has a finite base-$b$ expansion of length $h(n)$. By the definition of 
$\alpha$, we have
$$ \alpha  \;\; = \;\; \alpha_n \; + \; P_{n+1}^{-h(n+1)} \; + \;  P_{n+2}^{-h(n+2)} \; + \; \ldots \; .$$
It follows that for any $j > h(n)$
$$ \hat{G}^\alpha(b,j) \;\;\leq\;\; P_{n+1}^{-h(n+1)} \;+\; P_{n+2}^{-h(n+2)} + \ldots, $$
which easily implies  $ \hat{G}^\alpha(b,j) \leq P_{n+1}^{-h(n+1)+1} $ (use that $h$ is strictly increasing). Hence we also have 
 $\inverse{ \hat{G}^\alpha(b, j) } \geq  P_{n+1}^{h(n+1) - 1} > h(n+1)-1$ for any $j>h(n)$.

The considerations above show that we can compute $h(n+1)$ by the following algorithm:
\begin{itemize}
\item assume that $h(n)$ is computed;
\item compute $b= \prod_{i=0}^n P_i$;
\item search for  $y$ such that $y<\inverse{\hat{G}^\alpha(b, h(n)+1)}+1$ and $h(n+1)=y$;
\item give the output $y$.
\end{itemize}
This  algorithm  is primitive recursive in $\hat{G}^\alpha$: The computation of $b$  is an elementary computation.
The relation $h(x)=y$ is elementary, and thus the search for $y$ is elementary in $h(n)$ and $\hat{G}^\alpha$. 
This proves that $h$ is primitive recursive in $\hat{G}^\alpha$. 
But then  $f$ will also be  primitive recursive in $\hat{G}^\alpha$ as the graph of $f$ is elementary and 
$f(n) \leq h(n)$ (for any $n\in\nat$). This proves that $f \leq_{PR} \hat{G}^\alpha$.
\qed
\end{proof}

\begin{theorem} \label{evalouise}
Let $f$ be any  honest function.  
There exists an elementary function 
 $\check{T}:\rational \rightarrow \rational$ such that
(i) $\check{T}(q) =0$  if  $ q< \alpha^f$ and (ii)
$ q>  \check{T}(q) > \alpha^f$   if  $  q> \alpha^f$.
\end{theorem}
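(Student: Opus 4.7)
The plan is to construct $\check{T}$ by a search built around the two-sided bound $\alpha_n < \alpha < \alpha_n + P_{n+1}^{-h(n+1)+1}$, the upper estimate being the tail bound from the proof of Lemma~\ref{irrat}. Given $q \in \rational$, $\check{T}(q)$ will look for the smallest $n$ such that either (i) $\alpha_n \ge q$, in which case the output is $0$ (valid since $q \le \alpha_n < \alpha$); or (ii) $\alpha_n + P_{n+1}^{-h(n+1)+1} < q$, in which case the output is $\alpha_n + P_{n+1}^{-h(n+1)+1}$, an element of $(\alpha,q)$. Because $\alpha_n$ rises to $\alpha$ while $\alpha_n + P_{n+1}^{-h(n+1)+1}$ falls to $\alpha$ and $\alpha \ne q$ by irrationality, exactly one of these conditions eventually holds.

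Next I would bound the termination index elementarily in $|q|$. Write $q = p/d$ in lowest terms and let $D_n = \prod_{i\le n}P_i^{h(i)}$ be the denominator of $\alpha_n$. The rational $q - \alpha_n$, when nonzero, has denominator dividing $d D_n$, so $|q-\alpha_n|\ge 1/(d D_n)$; consequently the window $(\alpha_n,\,\alpha_n+P_{n+1}^{-h(n+1)+1}]$ can contain no rational with denominator $d$ (other than possibly $\alpha_n$ itself, which is caught by (i)) as soon as its width $P_{n+1}^{-h(n+1)+1}$ drops below $1/(d D_n)$. The condition $P_{n+1}^{h(n+1)-1} > d D_n$ therefore forces the search to succeed at $n$, and by inequality~(\ref{proph}) the ratio $P_{n+1}^{h(n+1)-1}/D_n$ is iterated-exponential in $n$, so this threshold is already attained at some $n$ only iterated-logarithmic in $|q|$, well within any elementary bound.

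The hard part will be keeping the arithmetic itself elementary, because $h(n)$ can grow faster than any elementary function of $n$, so $h(n)$ is not uniformly extractable from its elementary graph by a bounded search of elementary length. I plan to circumvent this via a self-adapting argument: if the algorithm reaches level $n$, then $q$ must lie in the window at level $n-1$, which together with $|q - \alpha_{n-1}|\ge 1/(d D_{n-1})$ yields $h(n) \le \log_{P_n}(d D_{n-1})+1$, elementary in $|q|$ provided $D_{n-1}$ already is. Inductively, at each level actually reached by the algorithm the value $h(n+1)$ is elementary-bounded in $|q|$ and can be extracted from the graph of $h$ by a bounded search of elementary length, after which $\alpha_n$ and $P_{n+1}^{-h(n+1)+1}$ are rationals of elementary size that can be compared to $q$. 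The finitely many initial data needed to start the induction (in particular $h(0)$, which may itself be enormous) can be hard-wired as constants of $\check{T}$, which depends on $f$ but is elementary for each fixed $f$. The main technical work is to verify that this bookkeeping composes consistently so that nothing escapes the elementary class.
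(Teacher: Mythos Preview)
Your overall strategy---sandwich $\alpha$ between $\alpha_n$ and $\beta_{n+1}:=\alpha_n+P_{n+1}^{-h(n+1)+1}$, compare $q$ against both, and use the self-tightening bound coming from $q$ lying in the previous window---is exactly the paper's. But there is a genuine gap at the terminating step. Your inductive argument says: if level $n$ is reached (neither (i) nor (ii) fired below $n$), then $q$ lay in the window at level $n-1$, whence $h(n)\le\log_{P_n}(dD_{n-1})+1$. This bounds $h(n)$, not $h(n+1)$; your own text slips from ``yields $h(n)\le\ldots$'' to ``the value $h(n+1)$ is elementary-bounded,'' and the second does not follow. Now suppose the search halts at level $n$ via case (ii). The proposed output is $\beta_{n+1}$, which requires $h(n+1)$, yet (ii) yields only the \emph{lower} bound $P_{n+1}^{h(n+1)-1}>(q-\alpha_n)^{-1}$. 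Concretely, take $q=\beta_n$: the $\beta_j$ are strictly decreasing, so neither (i) nor (ii) fires at any level $<n$, while at level $n$ case (ii) fires; the denominator of $\beta_n$ divides $D_{n-1}P_n^{h(n)-1}$, whose logarithm is linear in $h(n)$, whereas by (\ref{proph}) $h(n+1)>P_n^{2(n+2)(h(n)+1)^3}$ already dominates every elementary function of that denominator. So $\beta_{n+1}$ simply cannot be written down from $q$ by an elementary computation.

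The paper's proof patches exactly this hole. When case (ii) is detected but the bounded search for $h(n+1)$ fails, it abandons $\beta_{n+1}$ as output and instead returns a fallback value $q-b^{-t}$, where $b$ is a primorial fixed from the denominator of $q$ and $t$ is the least integer with $b^{-t}<q-\alpha_n$; a further bounded search against $(q-b^{-t}-\alpha_n)^{-1}$ then decides whether $\beta_{n+1}\le q-b^{-t}$ (so $q-b^{-t}$ lies in $(\alpha,q)$ and is a valid output) or $\beta_{n+1}>q-b^{-t}$ (in which case that second search has actually recovered $h(n+1)$ and one can output $\beta_{n+1}$ after all). Some fallback of this kind is unavoidable; your proposal does not supply one.
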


\begin{proof}
In addition to the sequence  $\alpha_j$ we need the sequence $\beta_j$ given by 
$$ \beta_0 \;\;=\;\; P_0^{-h(0)+1} \;\;=\;\; 2^{-h(0)+1} \;\;\;\mbox{ and }\;\;\; \beta_{j+1} \;\;=\;\; \alpha_j \;+\; P_{j+1}^{-h(j+1)+1}\; . $$
Observe that we have  $\alpha < \beta_j$ for all $j\in\nat$, since 
$$ \alpha \;-\; \alpha_j \;\;=\;\; P_{j+1}^{-h(j+1)} \;+\; P_{j+2}^{-h(j+2)} \;+\; \ldots \;\;\leq\;\; P_{j+1}^{-h(j+1)+1} $$
for any $j\in\nat$.

Now we will explain an algorithm that computes a function $\check{T}$ with the properties (i) and (ii).

\textit{Step 1 of the algorithm:} The input is the rational number $q$. We can w.l.o.g.\ assume that $0<q<1$.
Pick any $m',n\in\nat$ such that $q=m'n^{-1}$ and  $n \geq h(0)$.
Find $m\in\nat$ such that $q=m(P_0P_1\ldots P_n)^{-n}$, 
and compute the base $b$ such that $b=\prod_{i=0}^n P_i$ {\em (end of Step 1)}.

The rational number $q$ has a finite base-$b$ expansion of length $s$ where $s\leq n$. Moreover,
the rational numbers $\alpha_0, \alpha_1, \ldots, \alpha_n$ and $\beta_0, \beta_1, \ldots, \beta_n$ also have finite base-$b$ expansions.

\textit{Step 2 of the algorithm:} Compute (the unique) natural number $j < n$ such that
$$ h(j) \;\; \leq \;\; n \;\; < \;\; h(j+1) \; . $$
Furthermore, compute $\alpha_0,\alpha_1,\ldots,\alpha_j$ and $\beta_0,\beta_1,\ldots,\beta_j$ (\textit{end of Step 2}).

All the numbers $h(0),h(1),\ldots h(j)$ are less than or equal to  $n$, and $h$ has elementary 
graph. This entails that Step 2 is elementary in $n$ (and thus also elementary in $q$).

\textit{Step 3 of the algorithm:} If   $q \leq \alpha_k$ for some $k \leq j$, give the output $0$ and terminate.
If  $\beta_k < q$ for some $k \leq j$,  give the output $\beta_k$ and terminate (\textit{end of Step 3}).

Step 3 obviously gives the correct output. It is also obvious that the step is elementary in $q$.

If the algorithm has not yet terminated, we have $\alpha_j < q \leq \beta_j$. Now
$$ q \;\leq\; \beta_{j+1} \;\;\;\Leftrightarrow\;\;\; q \;-\; \alpha_j \;\leq\; P_{j+1}^{-h(j+1)+1} 
\;\;\;\Leftrightarrow\;\;\; P_{j+1}^{h(j+1)} \;\leq\; (q-\alpha_j)^{-1}P_{j+1}. $$
We have determined $\alpha_j$, and $h$ is an honest function. This makes it possible to check elementarily
if $q\leq\beta_{j+1}$: Search for $y< (q - \alpha_{j})^{-1}P_{j+1}$ such that $h(j+1)=y$. If no such $y$ exists, we have 
$q >\beta_{j+1}$. If such an $y$ exists, we have $q\leq\beta_{j+1}$ iff  $P_{j+1}^{y} \leq (q - \alpha_{j})^{-1}P_{j+1}$.

\textit{Step 4 of the algorithm:} Search for $y < (q-\alpha_j)^{-1}P_{j+1}$ such that $y = h(j+1)$. 
If the search  is successful and $P_{j+1}^y \leq (q-\alpha_j)^{-1}P_{j+1}$,  go to Step 5, otherwise 
go to Step 6B (\textit{end of Step 4}).

Clearly, Step 4 is elementary in $q$. If $q \leq \beta_{j+1}$, the next step is Step 5 (and we have computed $y = h(j+1)$). 
If $\beta_{j+1} < q$, the next step is Step 6B.

\textit{Step 5 of the algorithm:} Compute $\alpha_{j+1}$ and $\beta_{j+1}$. If $q \leq \alpha_{j+1}$,
give the output 0 and terminate.
If $\alpha_{j+1} < q$,  search for $z < (q-\alpha_{j+1})^{-1}P_{j+2}$ such that $z = h(j+2)$. 
If the search  is successful and $P_{j+2}^z \leq (q-\alpha_{j+1})^{-1}P_{j+2}$, give the output 0 and terminate, 
otherwise,  go to Step 6A (\textit{end of Step 5}).

Step 5 is elementary in $q$ since we have computed $h(j+1)$ in Step 4. 
If the algorithm terminates because $q \leq \alpha_{j+1}$, we obviously have $q < \alpha$ and the output is correct. 
If   $q > \alpha_{j+1}$,  the algorithm will not proceed to Step 6A 
iff $q \leq \beta_{j+2}$. So assume that $\alpha_{j+1} < q \leq \beta_{j+2}$.

It is a well-known fact that for any natural number $x \geq 2$ there is a prime number between $x$ and $2x$. It follows by induction that $P_y \leq 2^{y+1}$ for all $y\in\mathbb{N}$ and therefore $b = P_0 P_1 \ldots P_n \leq 2^{(n+1)^2}$.

Applying $n < h(j+1)$ together with inequality (\ref{proph}) we obtain
$$ b^{h(j+1)+1} \leq (2^{(n+1)^2})^{h(j+1)+1} < 2^{(h(j+1)+1)^3} < P_{j+2}^{h(j+2)-1} $$
and thus
 $$ \frac{1}{P_{j+2}^{h(j+2)-1}} < \frac{1}{b^{h(j+1)+1}}. $$
This entails
 $$ \alpha_{j+1} < \alpha < \beta_{j+2} < \alpha_{j+1} + \frac{1}{b^{h(j+1)+1}}, $$
but $\alpha_{j+1}$ has length $h(j+1)$, so the first $h(j+1)$ digits of $\alpha_{j+1}, \alpha, \beta_{j+2}$ coincide.
Moreover, $h(j+1) > n \geq s$ (recall that $s$ is the length of the base-$b$ expansion of $q$). Thus, 
we have $q < \alpha$, and the algorithm gives the correct output, namely 0. 
If the algorithm proceeds with Step 6A, we have $\beta_{j+2} < q$. 

\textit{Step 6A of the algorithm:} Compute the least $t$ such that $b^t > (q-\alpha_{j+1})^{-1}$. 
Search for $u < (q - b^{-t} - \alpha_{j+1})^{-1}P_{j+2}$ such that $u = h(j+2)$. 
If the search  is successful and $P_{j+2}^u < (q - b^{-t} - \alpha_{j+1})^{-1}P_{j+2}$, give the output $\beta_{j+2}$
and terminate, otherwise, give the output $q - b^{-t}$ and terminate (\textit{end of Step 6A}).

It is easy to see that Step 6A is elementary in $q$: we can compute $t$ elementarily in $q$, $b$ and $\alpha_{j+1}$ (and we have
already computed $b$ and $\alpha_{j+1}$ elementarily in $q$). When the execution of the step starts, we have $\beta_{j+2} < q$
(thus, $\beta_{j+2}$ will be a correct output, but we do not yet know if we will be able to compute $\beta_{j+2}$). 
If the search for $u$ is successful, we have $u=h(j+2)$. Then, we can compute
$\beta_{j+2}$ elementarily in $u$, and give $\beta_{j+2}$ as output. We also know that the search for $u$ is successful iff
$q - b^{-t} < \beta_{j+2}$. Thus, if the search for $u$ is not successful, we have $\alpha < \beta_{j+2} \leq q - b^{-t} < q$,
and we can give the correct output $q - b^{-t}$.

\textit{Step 6B of the algorithm:} Exactly the same as 6A, 
but replace $j+1$ and $j+2$ by $j$ and $j+1$, respectively (\textit{end of Step 6B}).

The argument for correctness of Step 6B  is the same as for Step 6A,
just replace $j+1$ and $j+2$ by $j$ and $j+1$, respectively, and note that we have $\beta_{j+1}<q$ when the execution of the step starts.
\qed
\end{proof}

\begin{definition}
A function $D:\rational \rightarrow \{0,1\}$ is a {\em Dedekind
cut} of  the real number $\beta$ when $D(q)=0$ iff  $q< \beta$.
\end{definition}

\begin{corollary} \label{brandenburg}
Let $f$ be any  honest function.  
The  Dedekind
cut of  the real number $\alpha^f$ is elementary.
\end{corollary}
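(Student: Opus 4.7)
The plan is to read off the Dedekind cut from the function $\check{T}$ already constructed in Theorem \ref{evalouise}. By Lemma \ref{irrat}, $\alpha^f$ is irrational, so for every $q\in\rational$ we have either $q<\alpha^f$ or $q>\alpha^f$ (never equality). The two clauses of Theorem \ref{evalouise} therefore partition the rationals: clause (i) forces $\check{T}(q)=0$ on the lower half, while clause (ii) forces $\check{T}(q)>\alpha^f>0$ on the upper half. Consequently the condition $\check{T}(q)=0$ is logically equivalent to $q<\alpha^f$.

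Concretely, I would define
\[
D(q) \;=\; \begin{cases} 0 & \text{if } \check{T}(q)=0, \\ 1 & \text{otherwise.} \end{cases}
\]
Then $D(q)=0$ iff $\check{T}(q)=0$ iff $q<\alpha^f$, which is exactly the definition of the Dedekind cut of $\alpha^f$. Since $\check{T}$ is elementary by Theorem \ref{evalouise} and the test ``is this rational equal to $0$?'' is elementary under any reasonable coding of $\rational$, the function $D$ is elementary as a composition of elementary operations.

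I expect no real obstacle here; the corollary is essentially a restatement of Theorem \ref{evalouise} once one notices that the two output regimes of $\check{T}$ are distinguished by whether the output is $0$ or not, and that irrationality of $\alpha^f$ rules out the boundary case. The only small point worth mentioning explicitly in the write-up is the appeal to Lemma \ref{irrat}, which is what allows us to conclude that $\check{T}(q)\neq 0$ implies $q>\alpha^f$ (rather than merely $q\geq \alpha^f$), so that $D(q)=1$ is the correct value of the Dedekind cut.
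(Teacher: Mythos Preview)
Your proposal is correct and follows essentially the same approach as the paper: define $D(q)=0$ if $\check{T}(q)=0$ and $D(q)=1$ otherwise, and observe that $D$ is elementary because $\check{T}$ is. Your explicit appeal to Lemma~\ref{irrat} to justify the biconditional $\check{T}(q)=0 \Leftrightarrow q<\alpha^f$ is a welcome clarification that the paper leaves implicit.
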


\begin{proof}
By Theorem \ref{evalouise} there is an elementary function $\check{T}$ such that
  $\check{T}(q) =0$ iff  $q< \alpha^f$. Let $D(q)=0$ if  $\check{T}(q) =0$, and let  $D(q)=1$ if  $\check{T}(q) \neq 0$.
The function $D$ is elementary since $\check{T}$ is elementary. Moreover, $D$ is the Dedekind cut of $\alpha^f$.
\qed
\end{proof}

\section{Main Results}

\begin{theorem} \label{rollerboller} For any subrecursive class $\xcs$ that is closed under primitive recursive operations, we have
$$(i)\;\; \xcs_{g\downarrow} \;\; \subset \;\; \bigcap_{b} \xcs_{b\downarrow}\;\;\;\mbox{ and } 
\;\;\;(ii)\;\;  \xcs_{g\uparrow} \;\; \subset \;\; \bigcap_{b} \xcs_{b\uparrow} .$$
\end{theorem}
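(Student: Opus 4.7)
The inclusions $\xcs_{g\downarrow}\subseteq\bigcap_b\xcs_{b\downarrow}$ and $\xcs_{g\uparrow}\subseteq\bigcap_b\xcs_{b\uparrow}$ are the soft direction: for each fixed base $b_0$, the function $n\mapsto \check{G}^\alpha(b_0,n)$ equals $\check{A}^\alpha_{b_0}$, and it is obtained from $\check{G}^\alpha$ by composition with a constant, so it lies in $\xcs$ whenever $\check{G}^\alpha$ does; the same holds for $\hat{G}^\alpha$ and $\hat{A}^\alpha_{b_0}$. The real content of the theorem is the strictness. The plan is to fix an honest $f$ with $f\notin\xcs$ — such an $f$ exists because $\xcs$ is subrecursive — and to exhibit separating witnesses built from the irrational number $\alpha^f$ of Definition 1.

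For (ii), I would take $\alpha^f$ itself as the witness. Theorem \ref{sebastian} gives that $\hat{A}^{\alpha^f}_b$ is elementary for every base $b$, and since elementary functions belong to $\xcs$, this places $\alpha^f$ in $\bigcap_b\xcs_{b\uparrow}$. If in addition $\hat{G}^{\alpha^f}$ were in $\xcs$, then by Theorem \ref{finbeck} and closure of $\xcs$ under primitive-recursive operations we would have $f\in\xcs$, contradicting the choice of $f$; hence $\alpha^f\notin\xcs_{g\uparrow}$.

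For (i), I would instead take $\beta=1-\alpha^f$, which is irrational and lies in $(0,1)$ because $\alpha^f$ does. Combining the defining identity $\sum_i \hat{A}^\gamma_b(i)+\sum_i \check{A}^\gamma_b(i)=1$, applied with $\gamma=\alpha^f$ and with $\gamma=\beta$, with uniqueness of the variable-base-sum representation, yields the termwise symmetry $\check{A}^\beta_b(i)=\hat{A}^{\alpha^f}_b(i)$ for all $i$ and $b$, and hence $\check{G}^\beta=\hat{G}^{\alpha^f}$ as functions of $(b,n)$. Theorem \ref{sebastian} then puts $\beta$ in $\bigcap_b\xcs_{b\downarrow}$, while $\check{G}^\beta\in\xcs$ would force $\hat{G}^{\alpha^f}\in\xcs$ and therefore $f\in\xcs$ by the same application of Theorem \ref{finbeck}, a contradiction; so $\beta\notin\xcs_{g\downarrow}$. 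The only step that requires careful verification is the symmetry identity between $\check{A}^\beta_b$ and $\hat{A}^{\alpha^f}_b$; once that is in hand, the strictness in both parts reduces to bookkeeping on top of Theorems \ref{sebastian} and \ref{finbeck}, and no diagonalization or universal-function machinery is invoked, in line with the methodological remark in the introduction.
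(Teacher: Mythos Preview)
Your argument is correct and matches the paper's own proof: part (ii) is literally the same, and for part (i) the paper merely says ``the proof of (i) is symmetric,'' which your use of $\beta=1-\alpha^f$ together with the identity $\check{A}^{\,1-\alpha}_b=\hat{A}^{\alpha}_b$ makes explicit. The symmetry identity you flag as needing care does hold by uniqueness of the sum representation, so there is no gap.
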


\begin{proof}
The inclusion $\xcs_{g\uparrow} \subseteq \bigcap_{b} \xcs_{b\uparrow}$ is trivial.
Pick an honest function $f$ such that $f\not\in\xcs$.
By Theorem \ref{sebastian}, we have  $\alpha^f \in \bigcap_{b} \xcs_{b\uparrow}$.
By Theorem \ref{finbeck}, we have  $\alpha^f \not\in  \xcs_{g\uparrow}$.
This proves that $\xcs_{g\uparrow}  \subset  \bigcap_{b} \xcs_{b\uparrow}$.
The proof of (i) is symmetric.
\qed
\end{proof}

\begin{definition}
A function $\hat{T}:\rational \rightarrow \rational$  is  a {\em trace function from below} for  the irrational number $\alpha$ when 
we have $q <    \hat{T}(q) < \alpha$  for any $q<\alpha$.
A function $\check{T}:\rational \rightarrow \rational$  is  a {\em trace function from above} for  the irrational number $\alpha$ when 
we have $\alpha <    \check{T}(q) < q$  for any $q>\alpha$.
A function $T:\rational \rightarrow \rational$  is  a {\em trace function} for  the irrational number $\alpha$ when 
we have
$
\left\vert \alpha - q \right\vert >  \left\vert \alpha - T(q) \right\vert 
$
for any $q$.

For any  subrecursive class $\xcs$,
let $\xcs_D$ denote the set of irrational numbers that have a Dedekind cut in $\xcs$;
let $\xcs_{T\uparrow}$ denote the set of irrational numbers that have a trace function from below in $\xcs$;
let $\xcs_{T\downarrow}$ denote the set of irrational numbers that have a trace function from above in $\xcs$;
let $\xcs_T$ denote the set of irrational numbers that have a trace function in $\xcs$;
let $\xcs_{[\, ]}$ denote the set of irrational numbers that have a continued fraction in $\xcs$.
\end{definition}

It is proved in \cite{ciejour} that we have
$ \xcs_{g\downarrow} \,\cap \,  \xcs_{g\uparrow} = \xcs_T = \xcs_{[\,]}$
for any  $\xcs$ closed under primitive recursive operations.
It is conjectured in \cite{ciejour} that $ \xcs_{g\downarrow} \,\neq \, \xcs_{g\uparrow}$.
Theorem \ref{dilledalle} shows that this conjecture holds.
The next theorem will be used as a lemma in the proof of Theorem \ref{dilledalle}.

\begin{theorem} \label{mullemalle}
For any subrecursive class $\xcs$ that is closed under primitive recursive operations, we have 
$$(i)\;\; \xcs_{T\uparrow} \, \cap  \, \xcs_{D}  \;\; = \;\;  \xcs_{g\uparrow}        \;\;\;\mbox{ and } 
\;\;\;(ii)\;\;  \xcs_{T\downarrow} \, \cap  \, \xcs_{D}  \;\; = \;\;  \xcs_{g\downarrow}      \; .$$
\end{theorem}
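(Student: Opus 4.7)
The plan is to prove (i) directly and deduce (ii) from it by the substitution $\alpha \leftrightarrow 1-\alpha$: the general sum approximation from above of $\alpha$ coincides with the general sum approximation from below of $1-\alpha$, a trace function from above for $\alpha$ corresponds under the elementary map $q \mapsto 1-q$ to a trace function from below for $1-\alpha$, and the Dedekind cuts transform analogously. Since $\xcs$ is closed under primitive recursive operations, (ii) reduces to (i).

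For the inclusion $\xcs_{g\uparrow} \subseteq \xcs_{T\uparrow} \cap \xcs_D$, I would first observe that from $\hat{G}^\alpha \in \xcs$ the $i$-th base-$b$ digit of $\alpha$ is elementarily computable: writing $\hat{G}^\alpha(b,j) = \xd_j b^{-k_j}$ with $k_j \geq j$ strictly increasing, it suffices to inspect $\hat{G}^\alpha(b,1),\ldots,\hat{G}^\alpha(b,i)$. To decide $D(q)$ for $0 < q < 1$, I take $b$ to be the product of the distinct primes dividing the denominator of $q$, so that $q$ has a finite base-$b$ expansion of length $s$ elementary in $q$, and compare the first $s$ base-$b$ digits of $q$ and $\alpha$ lexicographically; if they agree throughout, then $q < \alpha$, because $q$ has zeros beyond position $s$ while the irrationality of $\alpha$ rules out equality. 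For $\hat{T}(q)$ in the nontrivial case $q < \alpha$, either the first position $i \leq s$ at which the expansions differ has $\alpha_i > q_i$, in which case I set $\hat{T}(q) = (0.\alpha_1\cdots\alpha_i)_b$, or the two expansions agree on all $s$ digits and I locate the least index $n \leq s+1$ with $k_n > s$ and set $\hat{T}(q) = (0.\alpha_1\cdots\alpha_{k_n})_b$. Both choices satisfy $q < \hat{T}(q) < \alpha$, the strict right inequality following from the irrationality of $\alpha$.

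For the converse $\xcs_{T\uparrow} \cap \xcs_D \subseteq \xcs_{g\uparrow}$, I would reconstruct $\hat{G}^\alpha$ by primitive recursion on $i$, producing $(\xd_i, k_i)$ and the running partial sum $s_i$ in turn. The main obstacle is bounding the search for $k_i$: from $\hat{T}$ I obtain the positive rational $\delta_i = \hat{T}(s_{i-1}) - s_{i-1}$ with $\delta_i < \alpha - s_{i-1}$, and combining this with the tail estimate $\alpha - s_{i-1} < b \cdot b^{-k_i}$ yields the elementary bound $k_i < 1 + \log_b(1/\delta_i)$. Within this bounded range I use $D$ to locate the least admissible $k > k_{i-1}$ paired with the largest $\xd \in \{1,\ldots,b-1\}$ such that $s_{i-1} + \xd b^{-k} < \alpha$; this gives $(\xd_i, k_i)$. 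The construction is primitive recursive in $\hat{T}$ and $D$, so $\hat{G}^\alpha \in \xcs$. Conceptually, the trace function supplies the quantitative witness of proximity to $\alpha$ that turns an otherwise unbounded digit search into a primitive recursive one, while the Dedekind cut supplies the comparisons needed to isolate each digit.
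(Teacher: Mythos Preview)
Your argument is correct and follows the same two-inclusion structure as the paper, with essentially the same idea for the nontrivial inclusion $\xcs_{T\uparrow}\cap\xcs_D\subseteq\xcs_{g\uparrow}$: apply $\hat{T}$ to the current partial sum to bound the position of the next nonzero digit, then use $D$ to pin down that digit by a bounded search, and wrap the whole thing in a primitive recursion on $i$.

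The only notable difference is in the easy inclusion $\xcs_{g\uparrow}\subseteq\xcs_{T\uparrow}\cap\xcs_D$. You compute a base from the prime factors of the denominator and do a lexicographic digit comparison; the paper instead writes $q=m n^{-1}$ and works directly in base $n$, where $q$ is a one-digit number, so that $q<\beta$ iff $q\le\hat{G}^\beta(n,1)$, and takes $\hat{T}(q)=\hat{G}^\beta(n,1)+\hat{G}^\beta(n,2)$. Your route is perfectly valid, just a little more work than necessary; the paper's shortcut avoids computing any expansion of $q$ at all.
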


\begin{proof}
We prove (i). The proof of (ii) is symmetric.  Let $\beta$ be an irrational number in the interval $(0,1)$,
 let $q$ be a rational number number in the same interval,
 and let $m,n\in\nat$ be such that $q=mn^{-1}$. We have $q < \beta$ iff $q\leq \hat{G}^\beta(n,1)$.
Let
$$
 \hat{T}(q) =  \hat{G}^\beta(n,1) \, + \, \hat{G}^\beta(n,2)
\;\;\; \mbox{ and } \;\;\; 
D(q) \; = \;  \begin{cases}
0 & \mbox{if  $q\leq \hat{G}^\beta(n,1)$}   \\
1 & \mbox{otherwise.}
\end{cases} 
$$
Then, $\hat{T}$ is a trace function from below for $\beta$, and $D$ is the Dedekind cut of $\beta$.
This proves the inclusion  $\xcs_{g\uparrow}   \subseteq \xcs_{T\uparrow}  \cap   \xcs_{D}$.
The inclusion holds for any $\xcs$ closed under elementary operations (and thus also for any $\xcs$ closed
under primitive recursive operations).

Let $D$ be the Dedekind cut of an irrational number $\beta$ in the interval $(0,1)$, and
let $\hat{T}$ be a trace function from below for $\beta$. We will now give an algorithm (that uses $D$ and  $\hat{T}$)
  for computing $\hat{G}^\beta(b,n+1)$
(the computation of $\hat{G}^\beta(b,0)$ is trivial). We leave it to the reader to check that the algorithm is correct.

\paragraph{Step 1 of the algorithm:} Compute the rational number $q$ such that 
$$q \;\; =  \;\; \hat{T}\left( \sum_{i=0}^n \hat{G}^\beta(b,i)  \right)$$
{\em (end of step 1)}.

The rational number $q$ can be written in the form
\begin{align}
q \;\; = \;\;  \sum_{i=0}^n \hat{G}^\beta(b,i)  \; + \; \xd b^{-k} \; + \; \epsilon \label{ebleskiver}
\end{align}
for some base-$b$ digit $\xd>0$, some $k\in\nat$ and some $\epsilon <  b^{-k}$. 

\paragraph{Step 2 of the algorithm:} Compute $k$ such that (\ref{ebleskiver}) holds. Use the Dedekind cut $D$ to search
for $\ell\leq k$ and $\xd\in\{1,\ldots, b-1\}$ such that
$$
\sum_{i=0}^n \hat{G}^\beta(b,i) \; + \; \xd b^{-\ell} \;\; <  \;\; \beta  \;\; <  \;\; \sum_{i=0}^n \hat{G}^\beta(b,i) \; + \; \xd b^{-\ell} 
\; + \;  b^{-\ell} \; .
$$
Let $\hat{G}^\beta(b,n+1) = \xd b^{-\ell}$
{\em (end of step 2)}.

The algorithm  above shows that we can compute $\hat{G}^\beta$ when $\hat{T}$ and $D$ are
available. The algorithm is not primitive recursive, but it is easy to see that the algorithm can be reduced to
a primitive recursive algorithm. (The algorithm uses $ \hat{G}^\beta(b,0),  \hat{G}^\beta(b,1),\ldots, \hat{G}^\beta(b,n)$ in the computation of
  $\hat{G}^\beta(b,n+1)$. A primitive recursive algorithm is required to only use  $\hat{G}^\beta(b,n)$ in the computation
of $\hat{G}^\beta(b,n+1)$.) Thus we conclude that
$\xcs_{T\uparrow}  \cap   \xcs_{D}  \subseteq \xcs_{g\uparrow}$ when $\xcs$ is closed under primitive recursive operations.
\qed
\end{proof}

\begin{theorem} \label{dilledalle}
For any   subrecursive class  $\xcs$ that is closed under primitive recursive operations,
there exist irrational numbers $\alpha$ and $\beta$ such that
$$(i)\;\; \alpha \in  \xcs_{g\downarrow} \setminus \xcs_{g\uparrow}       \;\;\;\mbox{ and } 
\;\;\;(ii)\;\;   \beta \in   \xcs_{g\uparrow} \setminus \xcs_{g\downarrow}        \; .$$
\end{theorem}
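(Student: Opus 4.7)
My plan is to take $\alpha = \alpha^f$ from the definition preceding Lemma~\ref{irrat}, with $f$ an honest function that does not belong to $\xcs$ (such an $f$ exists because $\xcs$ is subrecursive), and to let $\beta = 1 - \alpha^f$. Part (ii) will then reduce to part (i) by a symmetry between $\hat G$ and $\check G$ under the substitution $\alpha \mapsto 1-\alpha$.

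For part (i), I would first establish $\alpha \notin \xcs_{g\uparrow}$. Theorem~\ref{finbeck} gives $f \leq_{PR} \hat{G}^{\alpha^f}$, so were $\hat{G}^{\alpha^f}$ in $\xcs$, closure under primitive recursive operations would force $f \in \xcs$, contradicting the choice of $f$. To show $\alpha \in \xcs_{g\downarrow}$ I would chain three earlier results: Theorem~\ref{evalouise} supplies an elementary trace function from above for $\alpha^f$, so $\alpha^f \in \xcs_{T\downarrow}$; Corollary~\ref{brandenburg} supplies an elementary Dedekind cut, so $\alpha^f \in \xcs_D$; and Theorem~\ref{mullemalle}(ii) then delivers $\alpha^f \in \xcs_{T\downarrow} \cap \xcs_D = \xcs_{g\downarrow}$, using that any subrecursive class closed under primitive recursive operations contains the elementary functions.

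For part (ii), I would prove the identities
$$ \hat{G}^{1-\alpha}(b,n) \;=\; \check{G}^{\alpha}(b,n) \quad\mbox{and}\quad \check{G}^{1-\alpha}(b,n) \;=\; \hat{G}^{\alpha}(b,n) $$
for every irrational $\alpha \in (0,1)$ and every base $b$. Both equalities follow from uniqueness of the variable-base sum expansion: since $1-\alpha = \sum_i \check{A}^{\alpha}_b(i) = \sum_i \hat{A}^{1-\alpha}_b(i)$ and in any such representation the exponents $k_i$ are strictly increasing while the digits $\xd_i$ are nonzero, the two decompositions of $1-\alpha$ must agree term by term. Once these identities are available, $\beta = 1-\alpha^f$ lies in $\xcs_{g\uparrow}$ iff $\alpha^f$ lies in $\xcs_{g\downarrow}$ and lies in $\xcs_{g\downarrow}$ iff $\alpha^f$ lies in $\xcs_{g\uparrow}$, so part (i) immediately yields $\beta \in \xcs_{g\uparrow} \setminus \xcs_{g\downarrow}$.

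The only non-routine point will be the uniqueness of the variable-base sum representation, but this reduces at once to the uniqueness of the base-$b$ expansion of an irrational (the sum representation being the list of its nonzero digits with their positions), so it costs only a sentence. Every other step is a direct invocation of a previously proved theorem of the paper.
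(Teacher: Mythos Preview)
Your proof is correct and matches the paper's approach exactly for part~(i): choose an honest $f\notin\xcs$, invoke Theorem~\ref{evalouise} and Corollary~\ref{brandenburg} to place $\alpha^f$ in $\xcs_{T\downarrow}\cap\xcs_D$, apply Theorem~\ref{mullemalle}(ii), and use Theorem~\ref{finbeck} for the exclusion. For part~(ii) the paper simply writes ``the proof of~(ii) is symmetric''; your choice $\beta=1-\alpha^f$ together with the identities $\hat G^{1-\alpha}=\check G^{\alpha}$ and $\check G^{1-\alpha}=\hat G^{\alpha}$ is precisely the natural way to cash out that symmetry, so the two arguments coincide.
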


\begin{proof}
Pick an honest function $f$ such that $f\not\in\xcs$.
We have $\alpha^f \in \xcs_{T\downarrow}$ by Theorem \ref{evalouise}, and we have 
$\alpha^f \in\xcs_{D}$ by Corollary \ref{brandenburg}.
By Theorem \ref{mullemalle}, we have  $\alpha^f \in \xcs_{g\downarrow}$.
By Theorem \ref{finbeck}, we have  $\alpha^f \not\in  \xcs_{g\uparrow}$.
This proves (i). The proof of (ii) is symmetric.
\qed
\end{proof}

\end{document}